\mathchardef\dash="2D
\newcommand{\an}{\mathrm{an}}
\newcommand{\rig}{\mathrm{rig}}
\newcommand{\eff}{\mathrm{eff}}
\newcommand{\dR}{\mathrm{dR}}
\newcommand{\Q}{\mathbb{Q}}
\newcommand{\QQ}{\mathbb{Q}}
\newcommand{\CC}{\mathbb{C}}
\newcommand{\C}{\mathbb{C}}
\newcommand{\Z}{\mathbb{Z}}
\newcommand{\GG}{\mathbb{G}}
\newcommand{\FOg}{\mathbf{FOg}}% filtered ogus
\newcommand{\CNM}{\mathbf{CNM}}% cohomological Nori motives
\newcommand{\DM}{\mathbf{DM}} % voevodsky motives
\newcommand{\Og}{\mathbf{Og}}% ogus
\newcommand{\Mod}{\mathrm{Mod}}
\newcommand{\calO}{\mathcal{O}}
\DeclareMathOperator{\End}{End}
\DeclareMathOperator{\Hom}{Hom}
\DeclareMathOperator{\Spec}{Spec}
\DeclareMathOperator{\Pic}{Pic}
\DeclareMathOperator{\D}{D}
\DeclareMathOperator{\gr}{gr}
\DeclareMathOperator{\Div}{Div}
\DeclareMathOperator*{\colim}{colim}
\newcommand{\et}{{\rm \acute{e}t}}
\newcommand{\isomto}{\overset{\sim}{\rightarrow}}
\title{The filtered Ogus realisation of motives}
\author{Bruno Chiarellotto}
\address{Universit\`a di Padova, Dipartimento di Matematica ``Tullio Levi-Civita''}
\email{chiarbru@math.unipd.it}
\author{Christopher Lazda}
\address{Korteweg--de Vries Institute for Mathematics, Universiteit van Amsterdam}
\email{c.d.lazda@uva.nl}
\author{Nicola Mazzari}
\address{Universit\'e de Bordeaux, IMB}
\email{nicola.mazzari@math.u-bordeaux.fr}
\newtheorem{exo}{Exercise}[section]
\newtheorem{thr}[exo]{Theorem}
\newtheorem{lmm}[exo]{Lemma}
\newtheorem{prp}[exo]{Proposition}
\newtheorem{crl}[exo]{Corollary}
\theoremstyle{definition}\newtheorem{dfn}[exo]{Definition}
\theoremstyle{remark}\newtheorem{rmk}[exo]{Remark}
\theoremstyle{remark}
\theoremstyle{definition}
\newcommand{\bu}{\bullet}
\newcommand{\isomfrom}{\overset{\sim}{\leftarrow}}
\newcommand{\ldR}{\log\text{-}\mathrm{dR}}
\newcommand{\cur}[1]{\mathcal{#1}}
\newcommand{\norm}[1]{\left\vert#1\right\vert}
\newcommand{\spec}[1]{\mathrm{Spec}\left(#1\right)}
\begin{document}

%\pagestyle{fancy}
%\lhead{}
%\rhead{}

\begin{abstract}
    We construct the (filtered) Ogus realisation of Voevodsky motives over a number field $K$. This realisation extends the functor defined on $1$-motives by Andreatta, Barbieri-Viale and Bertapelle. As an illustration we note that the analogue of the Tate conjecture holds for K3 surfaces.
\end{abstract}
\maketitle

%\footnotesize{\tableofcontents}

\section{Introduction}
Let $K$  be a number field, and $G_K$ its absolute Galois group.  For $X$ a $K$-scheme,  let $H_\ell^i(X)=H_\et^i(X_{\overline K},\Q_\ell)$ be the $\ell$-adic cohomology of $X$. If $X$ is projective and smooth over $K$, the Tate conjecture for divisors predicts that the $\ell$-adic cycle class map
\[
	c_{1,\ell}:\Pic(X)\otimes \Q_\ell\rightarrow H_\ell^2(X)(1)^{G_K}=\Hom_{\Q_\ell[G_K]}(\Q_\ell,H_\ell^2(X)(1))
\]
is surjective. This conjecture is known for abelian varieties thanks to Faltings \cite{Fal83}, and is equivalent to the fact that
\[
	\Hom(A,B)\otimes \Q_\ell \isomto \Hom_{\Q_\ell[G_K]}(V_\ell(A),V_
	\ell(B))
\]
for abelian varieties $A,B$ over $K$, where $V_\ell(-)$ denotes the rational Tate module. The Tate conjecture is also known for K3 surfaces by reduction to the case of abelian varieties via the Kuga--Satake construction \cite[Theorem 5.6(a)]{Tat:94a} (see also \cite{And96, Ben:15a}).

Recently Andreatta--Barbieri-Viale--Bertapelle \cite{ABVB16} have defined the filtered Ogus realisation for $1$-motives over $K$
\[
	T_{\FOg}:\mathcal{M}_{1,\Q} \to \FOg(K)
\]
where $\FOg(K)$ is the filtered Ogus category over $K$ (see \S\ref{sec:fog} for the definition) and $\mathcal{M}_{1,\QQ}$ is the category of $1$-motives up to isogeny (also called $1$-isomotives). Moreover  they proved that $T_{\FOg}$ is fully faithful. In particular for abelian varieties we have 
\[
	\Hom(A,B)\otimes \Q \cong \Hom_{\FOg(K)}(T_{\FOg}(A),T_{\FOg}(B))\ .
\]
The aim of this paper is to define a cohomology theory for $K$-varieties with values in $\FOg(K)$ compatible with $T_{\FOg}$. More precisely let $\mathbf{DM}_{gm}(K)$ be Voevodsky's category of geometric motives over $K$. Then we prove the following.
\begin{thr}
    There exists a (homological) realisation functor
   	\[
   		R_{\FOg}:\DM_{gm}(K)\to \D^b(\FOg(K))\ 
   	\]
	compatible with $T_{\FOg}$.
\end{thr}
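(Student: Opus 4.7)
The plan is to produce a presheaf with transfers on smooth $K$-schemes taking values in the bounded derived category $\D^b(\FOg(K))$, verify $\A^1$-homotopy invariance and Nisnevich descent, and then invoke the universal property of Voevodsky's construction to obtain the desired extension $R_{\FOg}:\DM_{gm}(K)\to \D^b(\FOg(K))$. The individual pieces of the target category are already accessible: algebraic de Rham cohomology with its Hodge filtration yields a triangulated realisation $R_\dR$ landing in the filtered derived category over $K$, while rigid cohomology of the special fibres at finite places $v$ of good reduction yields a rigid realisation $R_{\rig,v}$ at each such place. The main task is to bind these together into a single functor landing in $\D^b(\FOg(K))$ by including the Berthelot--Ogus comparison
\[
    \h_\dR^*(X/K)\otimes_K K_v \isomto \h_\rig^*(X_{k_v}/K_v)
\]
as part of the realisation data.

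I would proceed in stages. First, for smooth projective $X/K$ with good reduction outside a finite set $S$ of places, define $R_{\FOg}(X)$ as the triple consisting of the de Rham cohomology of $X/K$, the rigid cohomology of $X_{k_v}$ at each $v\notin S$, and the Berthelot--Ogus comparison morphisms; this extends readily to a functor on smooth projective $K$-varieties with values in $\FOg(K)$. Next, using a smooth proper integral model of $X$ over the ring of $S$-integers, upgrade the construction to the level of complexes so as to obtain a presheaf with transfers valued in $C^b(\FOg(K))$, compatible with Nisnevich (in fact Zariski) descent, by leveraging known cochain-level models of $R_\dR$ and $R_{\rig,v}$ together with a comparison zigzag realised on a common refinement (e.g. a crystalline complex on the integral model). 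Finally, apply the standard Cisinski--D\'eglise / Ivorra type machinery to extend to $\DM_{gm}(K)$ and pass to the derived category of $\FOg(K)$.

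The main obstacle will be the coherent construction of the comparison isomorphism at the level of complexes, functorial in $X$ and compatible with transfers and descent, so that the de Rham and rigid components actually glue into a single object of $\D^b(\FOg(K))$ rather than merely matching in cohomology. A subsidiary difficulty is the ``almost all primes'' bookkeeping: any given object of $\DM_{gm}(K)$ involves only finitely many varieties and hence has a common set $S$ of bad reduction, so one naturally works with direct limits over such $S$ and has to check that the resulting object is well defined in $\FOg(K)$. Compatibility with $T_{\FOg}$ should then be checked on $1$-motives, where both our construction and that of Andreatta--Barbieri-Viale--Bertapelle unwind, for an abelian variety $A/K$, to the standard comparison between $\h_\dR^1(A/K)$ and $\h_\rig^1(A_{k_v}/K_v)$; the general case follows since $1$-motives embed fully faithfully into $\DM_{gm}(K)_\Q$.
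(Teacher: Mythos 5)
There is a genuine gap, and it sits exactly at the two points your plan defers. First, nothing in your construction produces the \emph{weight} filtration, which is part of the data of an object of $\FOg(K)$: you only invoke the Hodge filtration, which plays no role in $\FOg(K)$ at all. For smooth projective varieties the weight structure is trivial, but your presheaf-with-transfers strategy forces you to handle all smooth (in particular open, affine) varieties, and there the filtration on $H^i_\dR$ is nontrivial and you must prove that, for almost all $v$, Frobenius acts on its graded pieces through Weil numbers of the correct weight. This is precisely the content of the paper's use of SNCD compactifications and simplicial hyperresolutions, spreading out over $\mathcal{O}_{K,S}$, and the Chiarellotto--Le Stum comparison of the logarithmic weight spectral sequence with its rigid counterpart (after Tate twists coming from Gysin maps); without it your functor lands at best in $\D^b(\Og(K))$, not $\D^b(\FOg(K))$. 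Second, the step you yourself flag as the ``main obstacle'' --- a chain-level, transfer- and descent-compatible comparison gluing the de\thinspace Rham and rigid complexes into a single complex of filtered Ogus objects --- is not solved but merely postulated via a ``comparison zigzag on a common refinement''; this is genuinely hard (it is the kind of problem that forces the ring-spectrum machinery of D\'eglise--Mazzari/D\'eglise--Nizio\l{}), and the paper's choice of the Nori/D\'eglise--Nizio\l{} route is designed exactly to avoid it: by Proposition 4.10 of D\'eglise--Nizio\l{} one only needs an $\FOg(K)$-valued representation of the diagram of good pairs, i.e.\ functorial relative cohomology \emph{groups} $H^i_\FOg(X,Y)$ with long exact sequences, and Beilinson's cellular stratifications then produce the complexes for you.

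A smaller but real issue is the compatibility with $T_{\FOg}$. Saying that both constructions ``unwind to the standard comparison'' for an abelian variety is not sufficient: one needs the statement for all $1$-motives (including the toric and lattice parts arising from $\Pic^+(X,Y)$ of open curves relative to points), the identification of Nori $1$-motives with $\mathcal{M}_{1,\Q}$ (Ayoub--Barbieri-Viale), and the Frobenius compatibility of the crystalline realisation with relative rigid cohomology up to the Tate twist, which the paper establishes by a d\'evissage on weights using Andreatta--Barbieri-Viale's Theorem B$'$. If you want to pursue your route, you must either build the weight-filtered chain-level comparison honestly or reduce, as the paper does, to cohomology of pairs where the weight argument is available.
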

 We use the approach of D\'eglise--Nizio\l{} \cite[Proposition 4.10]{DN18} to obtain the realisation. This in turn is based upon Nori's construction \cite{Huber2017} of an abelian category of mixed motives.  For a precise statement of the compatibility with $T_{\FOg}$ see \S\ref{ssec:1mot}. 
 
 As an illustration we can obtain a $\FOg$ version of the Tate conjecture for K3 surfaces over a number field. In fact the compatibility of de\thinspace Rham and crystalline cycle class maps \cite[Corollary 3.7]{BO83} gives rise to a homomorphism
\[
	c_{1,\FOg}:\Pic(X)\otimes \Q\rightarrow \Hom_{\FOg(K)}(K,H^2_\FOg(X)(1))
\]
and by using the full-faithfulness of \cite{ABVB16} in place of Faltings' theorem, we can similarly use the Kuga--Satake construction to show that the latter is surjective (hence an isomorphism) when $X$ is a K3 surface over $K$.

\subsection*{Acknowledgements} We thank Luca Barbieri-Viale because the present work has been inspired by one of his questions. We also thank Alessandra Bertapelle for useful discussion on the $\FOg$ category. Bruno Chiarellotto was supported by the Grant MIUR-PRIN  2015 ``Number  Theory  and  Arithmetic  Geometry''  and the Grant  PRAT 2015 ``Vanishing Cycles,  Irregularity and Ramification''. Christopher Lazda was supported by a Marie Curie fellowship of the Istituto Nazionale di Alta Matematica ``F. Severi'' and by the Netherlands Organization for Scientific Research (NWO).

\subsection{Notations and conventions}

Throughout this article, $K$ will denote a number field. A place of $K$ will always mean a finite place (we will never need to consider real or complex places). For every such place $v$ of $K$, let $K_v$ denote the completion, $\mathcal{O}_v$ the ring of integers, $k_v$ the residue field, $p_v$ its characteristic, and $q_v=p_v^{n_v}$ its order. For all $v$ which are unramified over $\Q$, let $\sigma_v$ denote the lift to $K_v$ of the absolute Frobenius of $k_v$. Following \cite{Huber2017}, a \emph{variety} over a field will be a reduced scheme, separated and quasi-projective over $k$; $\mathrm{Var}_K$ will denote the category of varieties over $K$, $\mathrm{Sm}_K$ the category of smooth varieties, and $\mathrm{Sm}_K^\mathrm{aff}$ the category of smooth affine varieties.

\section{The (filtered) Ogus category}\label{sec:fog}

We introduce the (filtered) Ogus category, following \cite{ABVB16}. Let $P$ be a cofinite set of absolutely unramified places of $K$. We define $\mathcal{C}_P$ to be the category whose objects are systems $M=(M_\dR,(M_v,\phi_v,\epsilon_v)_{v\in P})$ such that:
\begin{enumerate}
	\item $M_\dR$ is a finite dimensional $K$-vector space;
	\item $(M_v,\phi_v)$ is a $F$-$K_v$-isocrystal, that is, $M_v$ is equipped with a $\sigma_v$-linear automorphism $\phi_v$;
	\item $\epsilon=(\epsilon_v)_{v\in P}$ is a system of $K_v$-linear isomorphisms
	\[
		\epsilon_v:M_\dR\otimes K_v\to M_v\ .
	\]
\end{enumerate}

A morphism $f:M\to M'$ is then a collection $(f_\dR,(f_v)_{v\in P})$ where:
\begin{enumerate}
	\item $f_\dR:M_\dR\to M_\dR'$ is a $K$-linear map;
	\item $f_v:M_v\to M_v'$ is $K_v$-linear morphism compatible with Frobenius and such that $\epsilon_v^{-1} \circ f_v \circ \epsilon_v=f_\dR\otimes K_v$.
\end{enumerate}
Note that by the second criterion, to specify a morphism it is enough to specify $f_\dR$. There are obvious `forgetful' functors $\mathcal{C}_P\rightarrow \mathcal{C}_{P'}$ whenever $P'\subset P$ and we can form the Ogus category $\Og(K)$  as the 2-colimit
\[
	\Og(K)=2\colim_{P} \mathcal{C}_P
\]
where $P$ varies over all cofinite sets of unramified places of $K$. For an object $M\in \Og(K)$ and $n\in \Z$ we denote by $M(n)$ the Tate twist of $M$, that is where each Frobenius $\phi_v$ is multiplied by $p_v^{-n}$.

\begin{dfn} A \emph{weight filtration} on an object $M=(M_\dR,(M_v,\phi_v,\epsilon_v)_{v\in P})\in \mathcal{C}_P$ is an increasing filtration $W_\bullet M$ by subobjects in $\mathcal{C}_P$ such that for all $v\in P$ the graded pieces $\mathrm{Gr}_i^WM_v$ are pure of weight $i$. That is, all eigenvalues of the linear map $\phi_v^{n_v}$ are Weil numbers of $q_v$-weight $i$ (i.e. all their conjugates have absolute value $q_v^{i/2}$ \cite{Chi98}). Again, to give a weight filtration on $M$ it it suffices to give a filtration on $M_\dR$ which induces a weight filtration on all $M_v$.
\end{dfn}

We can therefore consider the filtered Ogus category $\FOg(K)$ whose objects are objects of $\Og(K)$ equipped with a weight filtration, and morphisms are required to be compatible with this filtration.

\begin{lmm}[\cite{ABVB16}, Lemma 1.3.2] The filtered Ogus category $\FOg(K)$ is a $\Q$-linear abelian category, and the forgetful functor
\[ \FOg(K)\rightarrow \Og(K) \]
is fully faithful. 
\end{lmm}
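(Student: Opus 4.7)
The plan is to first observe that $\Og(K)$ is naturally $K$-linear (hence $\QQ$-linear) and abelian, with kernels, cokernels, and direct sums formed componentwise in each $\mathcal{C}_P$; exactness of base change $-\otimes_K K_v$ ensures the isomorphisms $\epsilon_v$ transport correctly, and the $2$-colimit over $P$ preserves all of this structure. The real content of the lemma is then (i) transferring the abelian structure to $\FOg(K)$ and (ii) proving full faithfulness of the forgetful functor.

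For (ii), I would prove that any morphism $f\colon M\to M'$ in $\Og(K)$ whose source and target happen to be equipped with weight filtrations automatically preserves those filtrations. The key input is that for $F$-$K_v$-isocrystals, purity is inherited by sub- and quotient objects (since the eigenvalues of $\phi_v^{n_v}$ restrict), and, because Weil numbers of distinct weights are disjoint in $\overline{\QQ}$, there are no nonzero morphisms between pure $F$-$K_v$-isocrystals of different weights. Applied to the composition
\[
	W_i M_v \hookrightarrow M_v \xrightarrow{f_v} M'_v \twoheadrightarrow M'_v/W_i M'_v,
\]
induction on the length of $W_\bullet$ gives $f_v(W_i M_v)\subseteq W_i M'_v$ for every $v\in P$. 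Faithfully flat descent along $K\hookrightarrow K_v$, using the defining relation $\epsilon_v\circ (f_\dR\otimes K_v) = f_v\circ \epsilon_v$, then yields $f_\dR(W_i M_\dR)\subseteq W_i M'_\dR$, so $f$ is already a morphism in $\FOg(K)$.

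For (i), I would construct kernels and cokernels in $\FOg(K)$ as those of $\Og(K)$ equipped with the induced filtrations $W_i(\ker f) = \ker f \cap W_i M$ and $W_i(\Coker f) = \img(W_i M' \to \Coker f)$; each graded piece is then a subobject, respectively a quotient, of a pure object of weight $i$, hence itself pure. The identification $\coim f \isomto \img f$ in $\FOg(K)$ is then automatic: the canonical isomorphism in $\Og(K)$ and its inverse are both morphisms in $\FOg(K)$, by step (ii). The whole argument rests on a single non-formal ingredient --- the purity/vanishing-of-Hom statement for $F$-$K_v$-isocrystals --- which I expect to be the only step with any real content, the rest being routine bookkeeping with filtrations and base change.
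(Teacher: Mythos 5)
The paper gives no proof of this lemma---it is quoted verbatim from \cite{ABVB16}, Lemma 1.3.2---so there is nothing in the text to compare against line by line; your argument is, however, essentially the standard one (and the one used in loc.\ cit.): morphisms of $F$-isocrystals over $K_v$ preserve the generalised eigenspaces of the linearised Frobenius $\phi_v^{n_v}$, hence are automatically strictly compatible with weight filtrations, and this single fact yields both fullness of the forgetful functor and the identification $\coim f \isomto \img f$ needed for $\FOg(K)$ to be abelian. Your reduction of everything to the vanishing of $\Hom$ between pure isocrystals of distinct weights, the descent along $K\hookrightarrow K_v$ via $\epsilon_v$, and the induced filtrations on kernels and cokernels are all correct.

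One small but genuine correction: $\Og(K)$ is \emph{not} $K$-linear, only $\QQ$-linear. If you scale a morphism $f=(f_\dR,(f_v))$ by $\lambda\in K$, the condition $f_v\circ\phi_v=\phi'_v\circ f_v$ becomes $\sigma_v(\lambda)\,\phi'_v f_v=\lambda\,\phi'_v f_v$, since $\phi'_v$ is only $\sigma_v$-semilinear; so $\lambda f$ is again a morphism only when $\sigma_v(\lambda)=\lambda$ for all $v\in P$, which forces $\lambda\in\QQ$. This is precisely why the lemma asserts $\QQ$-linearity rather than $K$-linearity, and it does not affect the rest of your argument, since scalars in $\QQ$ suffice everywhere else.
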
 

\subsection{The basic construction}\label{ssec:basic} Let $X$ be a smooth variety over $K$. By Nagata plus Hironaka we can find a normal crossings compactification $X\subset \overline{X}$, with $D:=\overline{X}\setminus X$. Then we can consider the following cohomology groups
\begin{enumerate}
	\item (de Rham) There is an isomorphism
	\[ H_\dR^i(X/K) \cong H^i_{\log\text{-}\dR}((\overline{X},D)/K):=H_\dR^i(\overline{X},\Omega_{\overline{X}/K}\langle D\rangle), \]
where $\Omega_{\overline{X}/K}\langle D\rangle$ is the complex of algebraic differential forms on $\overline{X}$ with logarithmic poles along $D$. This is a finite dimensional $K$-vector space endowed with an increasing (weight) filtration $W_\bullet$, and a decreasing (Hodge) filtration $F^\bullet$. \cite[p.25]{Jan:95a}.
	\item (Rigid) There is a sufficiently divisible integer $n$ such that the pair $(\overline{X},D)$ has a model $(\overline{\mathcal{X}},\mathcal{D})$ over $\Spec(\mathcal{O}_K[1/n])$ such that $\overline{\mathcal{X}}/\mathcal{O}_K[1/n]$ is proper and smooth, and $\mathcal{D}\subset \overline{\mathcal{X}}$  a divisor with \emph{relative} normal crossings\footnote{In fact we can even suppose that the discriminant of $K$ divides  $n$.}.
	
	For any place $v$ of $K$ not dividing $n$, we can consider the rigid cohomology $H_{\rig}^i(\mathcal{X}_{k_v}/K_v)$  of the reduction modulo $v$ of $\mathcal{X}$, i.e.  $\mathcal{X}_{k_v}:=\mathcal{X} \otimes k_v$. Assuming that $v$ is not ramified in $K$, we can endow $H_{\rig}^i(\mathcal{X}_{k_v}/K_v)$ with a semilinear Frobenius endomorphism $\phi_v$. It turns out that $H_{\rig}^i(\mathcal{X}_{k_v}/K_v)$ is an $F$-isocrystal of mixed integral weights: all the eigenvalues of the linearised Frobenius $\phi_v^{n_v}$ are  Weil numbers of integral weight (relative to $k_v$) \cite{Chi98}.
	\item (Comparison) In the above setting the Berthelot (co-)specialisation map
	\[
		H_\dR^i(X/K) \otimes_K K_v \to H_{\rig}^i(\mathcal{X}_{k_v}/K_v)
	\] 
	is an isomorphism and it is functorial. By a result of Chiarellotto and Le Stum \cite{ChiLe:02a} we may identify (under the aforementioned isomorphism)
	\[
		\gr_s^W H_\dR^i(X/K) \otimes K_v = H_{\rig}^i(\mathcal{X}_{k_v}/K_v)^{\mathrm{wt}=s}
	\]
	where the latter is the sum of the all the generalised eigenspaces for $\phi_v^{n_v}$ associated to eigenvalues which are Weil numbers of $q_v$-weight $s$.
\end{enumerate}

\section{Realisation à la Nori}

A very general method for constructing realisations was given by Nori, and this was used to construct the derived syntomic realisation for varieties over $p$-adic fields in \cite{DN18}. For us, the basic point will be to construct appropriate $\FOg$-valued relative cohomology groups of a closed immersion $Y\hookrightarrow X$ of $K$-varieties; an appeal to Nori's basic lemma then allows the construction of $\FOg$-valued cohomology complexes which give rise to the required derived realisations. In order to construct the $\FOg$-structure on the relative cohomology $H^i(X,Y)$ we can follow \cite[Part~II, \S5.5]{PS08} and use descent to deduce the existence of a Frobenius compatible with the de\thinspace Rham weight filtration.

\subsection{Nori category}
The basic reference here is the exposition in \cite[Ch. II]{Huber2017} of Nori's original construction. Let $K$ be a field of characteristic $0$, and of cardinality $\leq \mathrm{card}(\C)$. We say that a system  $(X,Y,n)$ is a \emph{good pair} if: \begin{itemize}
\item $X$ is a $K$-variety;
\item $Y\subset X$ is a closed sub-variety;
\item for one (equivalently: for any) embedding $K\hookrightarrow \C$ the relative cohomology groups
\[H^i_B(X(\CC),Y(\CC),\QQ)\]
vanish for $i\neq n$.
\end{itemize}
Nori then considers a directed graph $\Delta_g^{\eff}$ whose vertices are exactly the set of good pairs over $K$, and which has the following two kinds of edges:
\begin{enumerate}
	\item (functoriality) $f^*:(X',Y',n)\to (X,Y,n)$  for any commutative square
	\[
		\xymatrix{
		X\ar[r]^f& X'\\
		Y\ar@{^{(}->}[u]\ar[r]_{f_{|Y}}&Y'\ar@{^{(}->}[u]
		}
	\]
	where $(X,Y,n),(X',Y',n)$ are good pairs.
	\item (coboundary) $\partial:(Y,Z,n-1)\to (X,Y,n)$ if $Z\subset Y\subset X$.
\end{enumerate}  
By definition, $\Delta_g$ is the directed graph obtained after localising $\Delta_g^{\eff}$ with respect to $(\GG_m,\{1\},1)$. The relative cohomology groups give a representation
\[
	H^*:\Delta_g\to \Mod_\QQ\qquad (\text{resp.}\ H^*:\Delta_g^{\eff}\to \Mod_\QQ)
\]
and the catgory of (cohomological) Nori motives $\CNM_K$ (resp. effective Nori motives $\CNM_K^\eff$) is the universal abelian category through which $H^*$ factors.
\subsection{The representability theorem of D\'eglise--Nizio\l} Here we briefly sketch a general method of D\'eglise and Nizio\l{} for constructing realisations, for more details the reader should consult \cite[\S4]{DN18}. Let $\mathscr{A}$ be a Tannakian $K$-linear category with a fibre functor $\omega:\mathscr{A}\to \mathrm{Vec}_\CC$ to the category of $\CC$-vector spaces. If there is representation $A:\Delta_g\to \mathscr{A}$ (i.e. a covariant functor) such that $\omega(A(X,Y,n))\cong H^n(X(\CC),Y(\CC),\CC)$, then there exists a motivic realisation (monoidal, covariant) functor
\[
	R:\DM_{gm}(K)\to D^b(\mathscr{A})
\]
such that $H^{-n}(RM(X,Y))=A(X,Y,n)^\vee$ for any good pair $(X,Y,n)$ (see \cite[Proposition 4.10]{DN18})\footnote{Mind that there is a misprint in loc. cit. relative to the cohomological  degree of LHS. They write $n$ instead of $-n$.}. Here we denote by $M(X,Y)$ the relative (homological) motive of the pair $(X,Y)$, characterised by the existence of an exact triangle
\[ M(Y)\rightarrow M(X) \rightarrow M(X,Y) \overset{+1}{\longrightarrow} \] 
in $\DM_{gm}(K)$. 
\begin{rmk}
	We note that the proof only uses affine schemes. So it is enough to work with affine good pairs. In fact, thanks to Beilinson \cite{Bei87}, any affine variety $X$ has a \emph{cellular stratification}
	\[
		F_\bullet X\ :\quad \varnothing=F_{-1}X\subset\cdots \subset F_dX=X
	\]
	such that: $(F_iX,F_{i-1}X,i)$ is a good pair; the complement $F_iX\setminus F_{i-1}X$ is smooth over $K$; either  $F_iX$ (resp. $F_{i-1}X$) is of dimension $i$ (resp. $i-1$), or $F_iX=F_{i-1}X$ is of dimension $<i$. 	Moreover, the set of cellular stratifications of a given $X$ form a filtered system, functorial in $X$. Thus for any affine scheme $X$ we can define the complex
	\[
		R'(X):=\colim_{F_\bullet}(A(F_0X,0)\to A(F_1X,F_{0}X,1)\to \cdots \to A(F_iX,F_{i-1}X,i)\cdots)
	\]
	which gives a functor $R':\left(\text{Sm}^\text{aff}_K\right)^\circ\to C^b(\text{Ind-}\mathscr{A})$, which is enough to construct $R$.
\end{rmk}

\section{Construction of \texorpdfstring{$\FOg$}{FOg}-valued cohomology}

In this section we will perform the key step in constructing the filtered Ogus realisation, by showing that relative de\thinspace Rham cohomology groups $H^i_\dR(X,Y/K)$ of $K$-varieties can be canonically enriched to the filtered Ogus category (Theorem \ref{theo: fogrel}). For smooth varieties, this follows from work of Chiarellotto and Le Stum \cite{ChiLe:02a}, and in general we use cohomological descent just as Peters and Steenbrink do in the mixed Hodge case in \cite[Part~II, \S~5.5]{PS08}.

\subsection{Cohomology of varieties with values in \texorpdfstring{$\FOg$}{FOg}}\label{sec: cohvar}

First, we will consider the case of a single variety $X/K$, and use cohomological descent to enrich the de\thinspace Rham cohomology groups of $X$ to the filtered Ogus category. 

\begin{dfn} \begin{enumerate}
\item A SNCD pair over $K$ will be a pair $(\overline{X},D)$ consisting of a smooth and proper $K$-variety together with a simple normal crossings divisor $D\subset \overline{X}$. A morphism $(\overline{Y},E)\rightarrow (\overline{X},D)$ of SNCD pairs is a morphism of varieties $f:\overline{Y}\rightarrow \overline{X}$ such that $f^{-1}(D)\subset E$, and the category of these objects will be denoted $\mathbf{SNC}_K$.
\item An SNCD resolution of a $K$-variety $X$ will be a simplicial SNCD pair $(\overline{X}_\bullet,D_\bullet)\in \mathbf{SNC}_K^{\Delta^\mathrm{op}}$, together with an augmentation $\pi_\bullet:X_\bullet:=\overline{X}_\bullet\setminus D_\bullet \rightarrow X$ which makes $X_\bullet$ a proper hypercover of $X$.
\item For a SNCD pair $(\overline{X},D)$ we denote
\[
	H^i_{\log\text{-}\dR}((\overline{X},D)/K):=H^i(\overline{X},\Omega_{\overline{X}/K}\langle D\rangle)
\]
its logarithmic de\thinspace Rham cohomology groups. Similarly for a simplicial SNCD pair.
\end{enumerate}
\end{dfn}

Let $(\overline{X}^{\leq n}_\bu,D_\bu^{\leq n})$ be an $n$-truncated SNCD pair over $K$. Then there exists a finite set of absolutely unramified primes $S\subset \norm{K}$ such that this $n$-truncated SNCD pair extends to an `$n$-truncated SNCD pair' 
\[ (\overline{\cur X}^{\leq n}_{\bu},\cur{D}^{\leq n}_{\bu})  \]
over the ring $\cur{O}_{K,S}$ of $S$-integers. In other words, $\overline{\cur X}^{\leq n}_{\bu}\rightarrow \spec{\cur{O}_{K,S}}$ is a smooth and proper $n$-truncated simplicial scheme, and $\cur{D}^{\leq n}_{\bu}\subset \overline{\cur{X}}^{\leq n}_{\bu}$ is a relative simple normal crossings divisor. We write $\mathcal{X}_\bu^{\leq n}:= \overline{\cur X}^{\leq n}_{\bu} \setminus \cur{D}^{\leq n}_{\bu}$. For any $v\not\in S$ we therefore obtain by \cite[Corollary 2.6]{BC94} an isomorphism
\begin{align*}
H^i_{\log\text{-}\dR}((\overline{X}^{\leq n}_\bu,D^{\leq n}_\bu)/K)\otimes_K K_v &\cong H^i_\rig( \mathcal{X}^{\leq n}_{\bu,k_v}/K_v) 
\end{align*} 
via which we can put a semilinear Frobenius endomorphism $\varphi_v$ on
\[ H^i_{\log\text{-}\dR}((\overline{X}^{\leq n}_\bu,D^{\leq n}_\bu)/K)\otimes_K K_v .\]
Since any two choices of model become isomorphic after possibly increasing $S$,we therefore obtain well-defined cohomology groups
\[ H^i_{\Og}(\overline{X}^{\leq n}_\bu,D^{\leq n}_\bu)\in \Og(K) \]
which are functorial in $(\overline{X}^{\leq n}_\bu,D^{\leq n}_\bu)$.

To show that these groups actually lie in the full sub-category $\FOg(K)\subset \Og(K)$, we need to produce a weight filtration. We consider the increasing weight filtration $W_\bu\Omega_{\overline{X}^{\leq n}_\bu}\langle D^{\leq n}_\bu \rangle$ on the logarithmic de\thinspace Rham complex of $(\overline{X}^{\leq n}_\bu,D^{\leq n}_\bu)$, that is the filtration coming from the number of log poles.

For each fixed $m\leq n$, we let $J_m$ denote the set of irreducible components of $D_m$; for any $I\subset J_m$ we write $D_{m,I}$ for the intersection of all elements of $I$, and $\norm{I}$ for the size of $I$. We let $a_I:D_{m,I}\rightarrow \overline{X}_m$ denote the natural closed immersion. Thus for any fixed $m$ we have by \cite[(3.1.5.2)]{Del:71b} that
\[ \gr^W_p\Omega_{\overline{X}_m}\langle D_m \rangle \cong \bigoplus_{I\subset J_m,\norm{I}=p} a_{I*} \Omega_{D_{m,I}}[-p]. \]
We therefore obtain a spectral sequence
	\begin{equation}
	\label{eq: ss1} \tag{$\star$} E^{p,q}_1 = \bigoplus_{i+j=p} \bigoplus_{\norm{I_j}=-i} H^{q+2i}_\dR(D_{j,I_j}/K) \Rightarrow H^{p+q}_{\log\text{-}\dR}((\overline{X}^{\leq n}_\bu,D^{\leq n}_\bu)/K)
\end{equation}
inducing a filtration $W_\bu$ on $H^{p+q}_{\log\text{-}\dR}((\overline{X}^{\leq n}_\bu,D^{\leq n}_\bu)/K)$.

%For any object $(\overline{X},D)\in \mathbf{SNC}_K$, then for any open affine cover $\frak{U}=\{U_i\rightarrow \overline{X}\}_i$ of $\overline{X}$ we may form the \v{C}ech complex
%\[ \check{C}(\mathfrak{U},\Omega^*_{\overline{X}}(\log D)) \]
%of the complex of logarithmic differentials. This has a natural filtration $W_\bu$ coming from the number of log poles, and hence we obtain a natural filtered complex
%\[\mathbf{R}\Gamma_{\dR}(\overline{X},D)=\mathrm{colim}_{\frak{U}}(\check{C}(\mathfrak{U},\Omega^*_{\overline{X}}(\log D)),W_\bu)\in C^+(\mathrm{Fil}_K) .\]
%This construction gives rise to a contravariant functor
%\[ \mathbf{R}\Gamma_{\ldR}: \mathbf{SNC}_K \rightarrow C^+(\mathrm{Fil}_K) \]
%and hence, via the total complex associated to a cosimplicial filtered complex, as explained in \S\ref{sec: filcom} above, we obtain contravariant functors
%\begin{align*}
% \mathbf{R}\Gamma_{\ldR} : \mathbf{SNC}_K^{\Delta^\mathrm{op}} &\rightarrow C^+(\mathrm{Fil}_K) 
%\end{align*}
%on simplicial SNCD pairs. In particular, we may consider the induced weight  filtration $W_\bu$ on $H^i_{\ldR}((\overline{X}_\bullet,D_\bullet)/K)$, again using the conventions from \S\ref{sec: filcom}.

\begin{prp} \label{prop: weights general} Let $(\overline{X}^{\leq n}_\bullet,D^{\leq n}_\bullet)$ be an $n$-truncated simplicial SNCD pair over $K$.
Then the filtration $W_\bu$ constructed above is a weight filtration, and exhibits $H^i_{\Og}(\overline{X}^{\leq n}_\bu,D^{\leq n}_\bu)$ as an object of the full subcategory $\FOg(K)\subset \Og(K)$.
\end{prp}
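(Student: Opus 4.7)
The plan is to verify, place by place for $v \notin S$, that after Berthelot (co)specialisation the filtration $W_\bullet$ becomes the Frobenius weight filtration on the rigid cohomology of $(\overline{\cur X}^{\leq n}_{\bullet, k_v}, \cur D^{\leq n}_{\bullet, k_v})$. This is the simplicial generalisation of the Chiarellotto--Le Stum identification used in \S\ref{ssec:basic} for a single smooth variety, and parallels the mixed-Hodge argument carried out in \cite[Part II, \S 5.5]{PS08}.

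First I would enlarge $S$ (allowed because we work up to finitely many places) so that $(\overline{\cur X}^{\leq n}_\bullet, \cur D^{\leq n}_\bullet)$ exists over $\cur O_{K,S}$ as a smooth proper simplicial scheme with relative SNCD. The key observation is that the log-pole filtration $W_\bullet$ is defined algebraically on the integral model, hence it induces a filtered spectral sequence in both de Rham and rigid cohomology, and these are canonically identified by the Berthelot (co)specialisation isomorphism of \cite[Cor.~2.6]{BC94} applied simplicial-level by simplicial-level. In particular $(\star)$ specialises to its rigid analogue, and the induced filtration $W_\bullet$ on $H^i_{\log\text{-}\dR}((\overline X^{\leq n}_\bullet, D^{\leq n}_\bullet)/K)\otimes_K K_v$ corresponds to the log-pole filtration on $H^i_\rig(\cur X^{\leq n}_{\bullet, k_v}/K_v)$.

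The second step is purity and degeneration. Each $E_1$-summand of $(\star)\otimes K_v$ is the rigid cohomology of a smooth proper $k_v$-variety $(D_{j, I_j})_{k_v}$, hence a pure $F$-isocrystal by Katz--Messing (as recorded in \cite{Chi98}). The Tate twists implicit in the Poincar\'e residue identification $\gr^W_r\Omega_{\overline X_m}\langle D_m\rangle \cong \bigoplus_{|I|=r} a_{I*}\Omega_{D_{m,I}}[-r]$, together with the simplicial direction, arrange the summands so that each contributes to a single weight of the abutment. Since every differential in $(\star)\otimes K_v$ is a morphism of $F$-isocrystals and must therefore preserve Frobenius weights, the higher differentials $d_r$ for $r\geq 2$ connect pure terms of distinct weights and must vanish, so $(\star)\otimes K_v$ degenerates at $E_2$. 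It follows that each $\gr^W_s H^i_{\log\text{-}\dR}\otimes K_v$ is a direct sum of pure $E_\infty$-pieces of weight $s$, establishing that $W_\bullet$ is a weight filtration in the sense of $\FOg(K)$.

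The main technical obstacle is verifying compatibility of the Berthelot (co)specialisation with the log-pole filtration in the simplicial setting, namely that (i) the specialisation acts diagonally on the $\gr^W$-summands and (ii) the Gysin-type $d_1$ differentials match between the de Rham and rigid spectral sequences. This amounts to a sheaf-theoretic compatibility between de Rham and rigid cohomology of smooth proper varieties and their smooth closed strata, and is handled by the functoriality of (co)specialisation in \cite{BC94} combined with the cohomological descent arguments already exploited by Chiarellotto--Le Stum \cite{ChiLe:02a}; once this is granted, the degeneration and purity of graded pieces follow formally from Katz--Messing exactly as in the Hodge-theoretic template of Peters--Steenbrink.
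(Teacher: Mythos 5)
Your proposal follows essentially the same route as the paper: spread out over $\mathcal{O}_{K,S}$, construct the rigid analogue of the weight spectral sequence $(\star)$ and identify it with $(\star)\otimes K_v$ via the comparison isomorphism and the method of Chiarellotto--Le Stum, insert the Tate twists coming from the Gysin/residue identification to make it Frobenius-equivariant, and conclude from purity of the $E_1$-terms that the induced filtration is a weight filtration. The explicit $E_2$-degeneration you add is a harmless extra (purity of the $E_\infty$-subquotients already suffices), so the argument is correct and matches the paper's proof.
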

\begin{proof} Let $(\overline{\mathcal{X}}_\bu^{\leq n},\mathcal{D}_{\bu}^{\leq n})$ be a spreading out of $(\overline{X}^{\leq n}_\bullet,D^{\leq n}_\bullet)$ over some $\mathcal{O}_{K,S}$ as above, and set $\mathcal{X}_\bu^{\leq n}:= \overline{\cur X}^{\leq n}_{\bu} \setminus \cur{D}^{\leq n}_{\bu}$. Then using \emph{exactly} the same method as in \cite{ChiLe:02a} we can construct a similar spectral sequence
\[ \label{eq: ss2} \tag{$\star_{k_v}$} E^{p,q}_1 = \bigoplus_{i+j=p} \bigoplus_{\norm{I_j}=-i} H^{q+2i}_\rig(\mathcal{D}^{\leq n}_{j,I_j,k_v}/K_v) \Rightarrow H^{p+q}_{\rig}(\mathcal{X}_{\bu,k_v}^{\leq n}/K_v) \] 
abutting to the rigid cohomology of $\mathcal{X}_{k_v,\bu}$. Moreover, it again follows \emph{exactly} as in \cite{ChiLe:02a} that these two spectral sequences become isomorphic after tensoring (\ref{eq: ss1}) with $K_v$. Now, the spectral sequence (\ref{eq: ss2}) is \emph{not} compatible with Frobenius, however, it is so after making suitable Tate twists (essentially coming from the Gysin isomorphism). We therefore obtain a Frobenius compatible spectral sequence 
\[ E^{p,q}_1 = \bigoplus_{i+j=p} \bigoplus_{\norm{I_j}=-i} H^{q+2i}_\rig(\mathcal{D}^{\leq n}_{j,I_j,k_v}/K_v)(i) \Rightarrow H^{p+q}_{\rig}(\mathcal{X}_{\bu,k_v}^{\leq n}/K_v), \] 
and we now observe that each $E_1^{p,q}$ term is pure of weight $q$. Thus the induced filtration on
\[ H^{p+q}_{\rig}(\mathcal{X}^{\leq n}_{\bu,k_v}/K_v) \cong H^{p+q}_{\log\text{-}\dR}((\overline{X}^{\leq n}_\bu,D^{\leq n}_\bu)/K) \otimes K_v\]
is indeed a weight filtration for the action of Frobenius, as required.

\end{proof}

To get the analogous result  arbitrary $K$-varieties we appeal to  Nagata compactification and Hironaka's embedded resolution of singularities, which together imply that every $K$-variety $X$ admits an SNCD resolution. Moreover, by \cite[(5.3.5) II]{Del74} we know that if
\[ (\overline{X}_\bu,D_\bu),\;\;\pi_\bu: X_\bu\rightarrow X \]
is such an SNCD resolution, and $i<n-1$ then there are isomorphisms
\[  H^i_\dR(X/K) \isomto H^i_\dR(X^{\leq n}_\bu/K) \isomfrom H^i_{\log\text{-}\dR}((\overline{X}^{\leq n}_\bullet,D^{\leq n}_\bullet)/K). \]
By Proposition \ref{prop: weights general} we have a canonical enrichment of $H^i_{\log\text{-}\dR}((\overline{X}^{\leq n}_\bullet,D^{\leq n}_\bullet)/K)$ to $\FOg(K)$, which we can transport to  $H^i_\dR(X/K)$ via this isomorphism. To check that this structure doesn't depend on the choice of SNCD resolution, we argue along completely standard lines. That is, any two SNCD resolutions can be dominated by a third, and the pull-back maps induce isomorphisms on cohomology (for more details see \cite[\S8.2]{Del74}). We have therefore proved the following. 

\begin{crl} \label{cor: indep1} There is a canonical enrichment of the functor
\[ H^i_\dR(-/K):\mathrm{Var}_K^\mathrm{op}\longrightarrow \mathrm{Vec}_K  \]
to a functor
\[ H^i_{\FOg}(-): \mathrm{Var}_K^\mathrm{op}\longrightarrow \FOg(K) \]
taking values in the filtered Ogus category.
\end{crl}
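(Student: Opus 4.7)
The proof will essentially formalise the sketch given in the paragraph immediately preceding the corollary, so the plan is to fill in the three things that are only gestured at: (i) how to extract an object of $\FOg(K)$ from any single variety, (ii) independence of choices, and (iii) functoriality.

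First, for a fixed variety $X/K$ and fixed cohomological degree $i$, I pick an SNCD resolution $\pi_\bullet:X_\bullet\to X$ coming from Nagata compactification and Hironaka, which exists by the cited results. For any $n\geq i+2$, the cited result \cite[(5.3.5) II]{Del74} gives canonical isomorphisms
\[
H^i_\dR(X/K)\;\isomto\; H^i_\dR(X_\bullet^{\leq n}/K)\;\isomfrom\; H^i_{\log\text{-}\dR}((\overline X_\bullet^{\leq n},D_\bullet^{\leq n})/K),
\]
and the right-hand side already carries an $\FOg(K)$-structure by Proposition \ref{prop: weights general}. Transporting along this chain of isomorphisms defines a candidate lift $H^i_{\FOg}(X)$. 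One should also check that this candidate is independent of $n$, but this is immediate because for $n'\geq n\geq i+2$ the truncation morphism induces, on $H^i$, an isomorphism of the spectral sequences \eqref{eq: ss1} and hence of their associated filtrations and Frobenii on all models.

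Next I verify independence of the SNCD resolution. Given two such resolutions $X_\bullet,X_\bullet'\to X$, a standard argument (\cite[\S8.2]{Del74}) produces a third SNCD resolution $X''_\bullet\to X$ dominating both, after which one has morphisms of truncated simplicial SNCD pairs $(\overline X''_\bullet{}^{\leq n},D''_\bullet{}^{\leq n})\to (\overline X_\bullet^{\leq n},D_\bullet^{\leq n})$ and similarly for $X'_\bullet$. By Proposition \ref{prop: weights general} these morphisms are morphisms in $\FOg(K)$; since they induce isomorphisms on $H^i_\dR$ for $i<n-1$, they induce isomorphisms in $\FOg(K)$, because a morphism of $\FOg(K)$ is an iso as soon as its underlying de\thinspace Rham map is (recall that morphisms in $\FOg(K)$ are determined by $f_\dR$ and the Frobenius/filtration compatibilities are closed conditions). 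This identifies the two $\FOg$-enrichments canonically.

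For functoriality, let $f\colon X\to Y$ be a morphism of $K$-varieties. Choose SNCD resolutions $X_\bullet,Y_\bullet$, and use Nagata/Hironaka once more on the graph of $f$ to build a third SNCD resolution $X'_\bullet$ of $X$ together with a morphism of simplicial SNCD pairs $(\overline X'_\bullet,D'_\bullet)\to (\overline Y_\bullet,E_\bullet)$ lifting $f$; by the independence already established, the $\FOg$-enrichment on $H^i_\dR(X/K)$ coming from $X'_\bullet$ agrees with the one coming from $X_\bullet$, and the induced map $H^i_\dR(Y/K)\to H^i_\dR(X/K)$ is then a morphism in $\FOg(K)$ by Proposition \ref{prop: weights general}. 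Composition is handled by repeating the construction for pairs of composable arrows and invoking independence again. The main obstacle here is purely bookkeeping: producing simultaneous SNCD resolutions compatible with any finite diagram of morphisms in $\mathrm{Var}_K$, but this is exactly the standard Deligne-style argument that underlies cohomological descent, so no new input is required beyond what is already cited.
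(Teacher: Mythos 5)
Your proposal is correct and follows essentially the same route as the paper: choose an SNCD resolution via Nagata plus Hironaka, transport the $\FOg$-structure of Proposition \ref{prop: weights general} through Deligne's comparison isomorphisms for $i<n-1$, prove independence of the resolution by dominating any two by a third, and obtain functoriality by choosing resolutions compatible with a given morphism. The extra details you supply (independence of the truncation level $n$, and the observation that a morphism in $\FOg(K)$ is invertible as soon as its de\thinspace Rham component is) are exactly the ``completely standard lines'' the paper leaves implicit.
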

\subsection{Cohomology of pairs with values in \texorpdfstring{$\FOg$}{FOg}}\label{sec: fogpairs}
Next we will deal with the relative cohomology of pairs. Suppose therefore that we are given a morphism
\[ 
f^{\leq n}_\bu:(\overline{Y}^{\leq n}_\bu,E^{\leq n}_\bu)\rightarrow (\overline{X}^{\leq n}_\bu,D^{\leq n}_\bullet)
 \]
of $n$-truncated simplicial SNCD pairs over $K$. As before we can spread out to obtain
\[ 
f^{\leq n}_\bu:(\overline{\mathcal{Y}}_\bu^{\leq n},\mathcal{E}_{\bu}^{\leq n}) \rightarrow(\overline{\mathcal{X}}_\bu^{\leq n},\mathcal{D}_{\bu}^{\leq n})
  \]
over some ring of integers $\mathcal{O}_{K,S}$, set $ \mathcal{X}^{\leq n}_\bu = \overline{\mathcal{X}}_\bu^{\leq n} \setminus \mathcal{D}_{\bu}^{\leq n}$ and $\mathcal{Y}^{\leq n}_\bu = \overline{\mathcal{Y}}_\bu^{\leq n} \setminus \mathcal{E}_{\bu}^{\leq n}$. For $v\not\in S$, the resulting comparison theorem
\begin{align*}
H^i_{\ldR}((\overline{X}^{\leq n}_\bu,D^{\leq n}_\bu),(\overline{Y}^{\leq n}_\bu,E^{\leq n}_\bu)/K)\otimes K_v &\cong H^i_\rig(\mathcal{X}^{\leq n}_{\bu,k_v},\mathcal{Y}^{\leq n}_{\bu,k_v}/K)
\end{align*} 
endows the LHS with a Frobenius structure, and thus gives rise to $\Og(K)$-valued cohomology groups $H^i_{\Og}((\overline{X}^{\leq n}_\bu,D_\bu),(\overline{Y}^{\leq n}_\bu,E_\bu))$. As for de Rham, the relative rigid cohomology is defined via mapping cone. To obtain a filtration we use \cite[Part I, Theorem 3.22]{PS08}  on the mapping cone of
\[ \Omega_{\overline{X}^{\leq n}_\bu}\langle D^{\leq n}_\bu \rangle \rightarrow  f^{\leq n}_{\bu*}\Omega_{\overline{Y}^{\leq n}_\bu}\langle  E^{\leq n}_\bu \rangle  \]
and We obtain a filtration $W_\bu$ on the cohomology groups 
\[ H^i_{\log\text{-}\dR}((\overline{X}^{\leq n}_\bu,D_\bu),(\overline{Y}^{\leq n}_\bu,E^{\leq n}_\bu)/K). \]

\begin{prp} Let $f^{\leq n}_\bu:(\overline{Y}^{\leq n}_\bu,E^{\leq n}_\bu)\rightarrow (\overline{X}^{\leq n}_\bu,D^{\leq n}_\bullet)$ be a morphism of $n$-truncated simplicial SNCD pairs over $K$. Then the filtration $W_\bu$ constructed above on $H^i_{\ldR}((\overline{X}^{\leq n}_\bu,D^{\leq n}_\bu),(\overline{Y}^{\leq n}_\bu,E^{\leq n}_\bu)/K)$ is indeed a weight filtration, and thus the cohomology groups $H^i_{\Og}((\overline{X}^{\leq n}_\bu,D^{\leq n}_\bu),(\overline{Y}^{\leq n}_\bu,E^{\leq n}_\bu)/K)$ lie in $\FOg(K)\subset \Og(K)$.
\end{prp}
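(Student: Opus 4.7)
The plan is to reduce to Proposition~\ref{prop: weights general} by means of the long exact sequence of the pair. The mapping cone of $\Omega_{\overline{X}^{\leq n}_\bu}\langle D^{\leq n}_\bu\rangle \to f^{\leq n}_{\bu*}\Omega_{\overline{Y}^{\leq n}_\bu}\langle E^{\leq n}_\bu\rangle$, whose hypercohomology defines $H^\bu_{\ldR}((\overline X,D),(\overline Y,E))$, sits in a distinguished triangle inducing a long exact sequence
\[
\cdots\to H^{i-1}_{\ldR}(\overline Y,E)\to H^i_{\ldR}((\overline X,D),(\overline Y,E))\to H^i_{\ldR}(\overline X,D)\to H^i_{\ldR}(\overline Y,E)\to\cdots
\]
of $K$-vector spaces (suppressing truncation and simplicial decoration for readability). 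After spreading out over $\mathcal O_{K,S}$ and running the same mapping cone construction on the rigid side, one obtains an analogous Frobenius-equivariant long exact sequence, identified with the above (after $\otimes_K K_v$) via the relative comparison isomorphism; this is precisely the Frobenius structure on the relative cohomology underlying $H^i_{\Og}((\overline X,D),(\overline Y,E))$ constructed in the preamble.

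By Proposition~\ref{prop: weights general} the outside terms lie in $\FOg(K)$, with their Peters--Steenbrink filtrations serving as weight filtrations in the Frobenius sense. The Peters--Steenbrink weight filtration on the mapping cone (\cite[Part~I, Theorem 3.22]{PS08}) is constructed explicitly from the weight filtrations on source and target of $f^{\leq n}_\bu$ with a shift by one, in such a way that every morphism in the long exact sequence above is strictly compatible with $W_\bu$.

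Since $\FOg(K)$ is abelian and morphisms there are strict for the weight filtration, applying $\gr^W_j$ to the $\otimes_K K_v$-tensored long exact sequence produces a short exact sequence of Frobenius-pure objects whose weights can be read off from the outside terms. The shift-by-one in the Peters--Steenbrink construction matches the degree shift in the long exact sequence, so that each $\gr^W_j H^i_{\ldR}((\overline X,D),(\overline Y,E))\otimes_K K_v$ is pure of $q_v$-weight $j$ for every $v\not\in S$; hence $W_\bu$ is a weight filtration and the relative cohomology lies in $\FOg(K)$. The main bookkeeping obstacle is verifying that the explicit weight shift in the Peters--Steenbrink mapping cone filtration matches the one needed to produce a strict long exact sequence, and that this compatibility survives the passage to rigid cohomology via the relative comparison isomorphism; this is routine once one writes out the formulas from loc.\ cit.\ alongside the Gysin-type Tate twists appearing in the rigid spectral sequence of the proof of Proposition~\ref{prop: weights general}.
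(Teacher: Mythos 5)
Your proposal is correct and follows essentially the same route as the paper: the long exact sequence of the pair, strictness with respect to $W_\bu$ coming from mixed Hodge theory (the Peters--Steenbrink mapping-cone construction), and purity of the graded pieces of the outer terms from Proposition~\ref{prop: weights general}, transported across the comparison isomorphism to control the Frobenius eigenvalues on $\gr^W_\bu$ of the relative cohomology. Only note that the strictness must be sourced from the mixed Hodge structure (as you do in your second paragraph), not from ``morphisms in $\FOg(K)$ are strict'', since the middle term is not yet known to lie in $\FOg(K)$ at that point.
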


\begin{proof}
We have a long exact sequence
\begin{align*}
\ldots \rightarrow H^i_{\ldR}((\overline{X}^{\leq n}_\bu,D^{\leq n}_\bu),&(\overline{Y}^{\leq n}_\bu,E^{\leq n}_\bu)/K) \rightarrow H^i_{\ldR}((\overline{X}^{\leq n}_\bu,D^{\leq n}_\bu)/K) \\
&\rightarrow H^i_{\ldR}((\overline{Y}^{\leq n}_\bu,E^{\leq n}_\bu)/K) \rightarrow \ldots
\end{align*}
which is the de Rham part of a long exact senquence in  $\Og(K)$. Moreover the groups $H^i_{\ldR}((\overline{X}^{\leq n}_\bu,D^{\leq n}_\bu)/K)$ and $H^i_{\ldR}((\overline{Y}^{\leq n}_\bu,E^{\leq n}_\bu)/K)$ underlie objects in $\FOg(K)$.  By mixed Hodge theory the above long exact sequence is \emph{strictly exact} with respect to the filtrations $W_\bu$.   If we therefore let $\ker^i$ denote the kernel of
\[ H^i_{\ldR}((\overline{X}^{\leq n}_\bu,D^{\leq n}_\bu)/K) \rightarrow H^i_{\ldR}((\overline{Y}^{\leq n}_\bu,E^{\leq n}_\bu)/K) \]
and $\mathrm{coker}^{i-1}$ the cokernel of 
\[ H^{i-1}_{\ldR}((\overline{X}^{\leq n}_\bu,D^{\leq n}_\bu)/K) \rightarrow H^{i-1}_{\ldR}((\overline{Y}^{\leq n}_\bu,E^{\leq n}_\bu)/K), \]
then the filtrations on $ H^i_{\ldR}((\overline{X}^{\leq n}_\bu,D^{\leq n}_\bu)/K)$ and $H^{i-1}_{\ldR}((\overline{Y}^{\leq n}_\bu,E^{\leq n}_\bu)/K)$ induce weight filtrations on $\ker^i$ and $\mathrm{coker}^{i-1}$ respectively, exhibiting them as objects of $\FOg(K)$. Thus for almost all unramified places $v$ of $K$, and for all integers $k$, we have an short exact sequence
\[ 0\rightarrow W_k\mathrm{coker}^{i-1}\rightarrow W_kH^i_{\ldR}((\overline{X}^{\leq n}_\bu,D^{\leq n}_\bu),(\overline{Y}^{\leq n}_\bu,E^{\leq n}_\bu)/K) \rightarrow W_k\ker^i\rightarrow 0 \]
of $K_v$-vector spaces. In particular, all Frobenius eigenvalues on the $k$th piece $W_kH^i_{\ldR}((\overline{X}^{\leq n}_\bu,D^{\leq n}_\bu),(\overline{Y}^{\leq n}_\bu,E^{\leq n}_\bu)/K) \otimes K_v$ are Weil numbers of weight $\leq k$, and thus $W_\bu$ is indeed a weight filtration on $H^i_{\ldR}((\overline{X}^{\leq n}_\bu,D^{\leq n}_\bu),(\overline{Y}^{\leq n}_\bu,E^{\leq n}_\bu)/K)$.
\end{proof}

Now if we are given a morphism $f:Y\rightarrow X$ of $K$-varieties, then we can always extend $f$ to a morphism of SNCD resolutions $(\overline{Y}_\bu,E_\bu)\rightarrow (\overline{X}_\bu,D_\bullet)$. Now arguing along essentially the same lines as in Proposition \ref{cor: indep1} we can show that the isomorphism
\[  H^i_\dR(X,Y/K)\cong H^i_{\ldR}((\overline{X}^{\leq n}_\bu,D^{\leq n}_\bu),(\overline{Y}^{\leq n}_\bu,E^{\leq n}_\bu)/K) \]
for $i<n-1$ allows us to view the former canonically as an object in $\FOg(K)$. If we let $\mathrm{Mor}_K$ denote the category of pairs of varieties over $K$, with morphisms just commutative diagrams, we therefore get the following result.

\begin{thr} \label{theo: fogrel} There is a canonical lifting \[ H^i_\FOg(-,-):\mathrm{Mor}_K \rightarrow \FOg(K)\]
of algebraic de\thinspace Rham cohomology
\[ H^i_\dR(-,-): \mathrm{Mor}_K \rightarrow \mathrm{Vec}_K \]
such that for any triple
\[ Z \rightarrow Y \rightarrow X \]
the long exact sequence in relative de\thinspace Rham cohomology induces a long exact sequence
\[ \ldots \rightarrow H^i_\FOg(X,Y) \rightarrow H^i_\FOg(X,Z)\rightarrow H^i_\FOg(Y,Z)\rightarrow \ldots \]
in $\FOg(K)$. In particular there is a realisation $H^*_\FOg:\CNM^\eff\to \FOg(K)$.
\end{thr}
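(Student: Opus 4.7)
The plan is to build the functor $H^i_\FOg(-,-)$ by following the same strategy already used in Corollary \ref{cor: indep1} for a single variety, and then bootstrap to the realisation on $\CNM^\eff$ using the universal property of Nori motives.

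First I would construct the functor on pairs. Given a pair $(X,Y)\in \mathrm{Mor}_K$, use Nagata and Hironaka to choose SNCD resolutions of both $X$ and $Y$; a standard argument (as recalled just before the theorem) shows that any morphism $Y\to X$ can be lifted to a morphism $f_\bullet:(\overline{Y}_\bullet,E_\bullet)\to(\overline{X}_\bullet,D_\bullet)$ of simplicial SNCD pairs. For $n$ larger than $i+1$, combining Deligne's comparison $H^i_\dR(X/K)\isomto H^i_\ldR((\overline{X}^{\leq n}_\bullet,D^{\leq n}_\bullet)/K)$ (and the analogue for $Y$) with the mapping cone definition of relative cohomology yields an isomorphism $H^i_\dR(X,Y/K)\cong H^i_\ldR((\overline{X}^{\leq n}_\bullet,D^{\leq n}_\bullet),(\overline{Y}^{\leq n}_\bullet,E^{\leq n}_\bullet)/K)$. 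The preceding proposition then transports a canonical $\FOg(K)$-structure to $H^i_\dR(X,Y/K)$.

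Independence of the choice of SNCD resolution, and functoriality in $\mathrm{Mor}_K$, are checked in complete analogy with Corollary \ref{cor: indep1}: any two SNCD resolutions of a morphism of pairs can be dominated by a common third one, and the induced pull-back maps are isomorphisms on cohomology; similarly, any commutative square in $\mathrm{Mor}_K$ can be lifted (after further refinement if necessary) to a morphism of simplicial SNCD data, and the $\FOg$-valued maps so obtained are independent of the lift. Since morphisms in $\FOg(K)$ are determined by their de\thinspace Rham components, no further data has to be tracked: every comparison reduces to the corresponding statement on the underlying $K$-vector spaces, which is standard.

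For the long exact sequence attached to a triple $Z\hookrightarrow Y\hookrightarrow X$, I would extend the data simultaneously to a triple of compatible SNCD resolutions $(\overline{Z}_\bullet,F_\bullet)\to (\overline{Y}_\bullet,E_\bullet)\to (\overline{X}_\bullet,D_\bullet)$; the iterated mapping cone of logarithmic de\thinspace Rham complexes yields the usual distinguished triangle, and hence the expected long exact sequence on cohomology. Again, since morphisms in $\FOg(K)$ are controlled by their $\dR$-parts and the induced filtrations come from the mixed Hodge theory of the triple (by \cite[Part~I, Thm.~3.22]{PS08}), the connecting maps automatically respect $W_\bullet$ and, via the comparison isomorphism with rigid cohomology, the Frobenius structures. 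This is where I expect the main bookkeeping effort to lie: one must ensure the spreading-out data over $\mathcal{O}_{K,S}$ can be chosen compatibly across the three terms of the triple, but since only finitely many primes of bad reduction appear, enlarging $S$ harmlessly solves this.

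Finally, for the realisation on $\CNM^\eff$, I would invoke Huber's universal property: any representation $\Delta_g^\eff\to \mathcal{A}$ to an abelian $\Q$-linear category factors through $\CNM^\eff$. On vertices I would send $(X,Y,n)\mapsto H^n_\FOg(X,Y)$, on functoriality edges use the contravariance of $H^i_\FOg$ constructed above, and on coboundary edges $\partial:(Y,Z,n-1)\to (X,Y,n)$ use the connecting homomorphism from the long exact sequence. Compatibility of $\partial$ with $f^*$ is built into the functoriality of mapping cones at the level of simplicial SNCD data, so the conditions of the universal property are met, producing the required realisation $H^*_\FOg:\CNM^\eff\to \FOg(K)$.
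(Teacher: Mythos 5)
Your proposal follows essentially the same route as the paper: transport the $\FOg(K)$-structure from the preceding proposition on (truncated) simplicial SNCD pairs via Deligne's comparison isomorphism, check independence of the resolution and functoriality exactly as in Corollary~\ref{cor: indep1} (using that morphisms in $\FOg(K)$ are determined by their de\thinspace Rham components), obtain the long exact sequence of a triple from compatible resolutions and mapping cones with Frobenius-compatibility coming from the parallel rigid-cohomology construction, and deduce the realisation on $\CNM^\eff$ from the universal property of Nori's category. This matches the paper's argument, so no further comment is needed.
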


Thus following the general method of \cite[\S4]{DN18} outlined above we can construct a (covariant) realisation functor
\[ R_\FOg: \mathbf{DM}_{gm}(K) \rightarrow D^b(\FOg(K)) \]
such that $H^{-n}(R_\FOg M(X)) \cong H^n_{\FOg}(X)^\vee$ for all $K$-varieties $X$.

\section{Compatibility with the realisation for \texorpdfstring{$1$}{1}-motives}\label{ssec:1mot}

In this section we want to compare the Ogus realization of $1$-motives \cite{ABVB16} with that for Nori motives. We follow \cite[\S~6.2]{AyoBar:15a}, but we will use a cohomological convention.

Let $S$ be a Noetherian scheme and $\pi:\overline{X}\to S$ a projective smooth scheme whose geometric fibers are connected curves of the same genus. The only cases we will use are $S=\spec{R}$ for $R= K,K_v,\mathcal{O}_v,k_v$. Then the fppf sheaf $\mathbf{R}^1\pi_*\mathbb{G}_{m, \overline{X}}$ is represented by a group scheme $\Pic_{\overline{X}/S}$ and the subfunctor $\Pic^0_{\overline{X}/S}$ of line bundles of degree zero on each fibre of $\pi$ is projective abelian scheme over $S$ \cite[Remark 5.26]{Kle:05a}. For any closed subscheme $i:Y\subset \overline{X}$, we have a surjective map
\[ \mathbb{G}_{m,\overline{X}} \rightarrow i_*\mathbb{G}_{m,Y}, \]
and we define the fppf sheaf $\mathbb{G}_{m, \overline{X}:Y}$ to be the kernel. If $Y$ is \'etale over $S$, then $\mathbf{R}^1\pi_*i_*\mathbb{G}_{m,Y}$ vanishes, and there is a short exact sequence of fppf sheaves
\[
	 0\to \pi_*i_*\mathbb{G}_{m, Y}/\pi_*\mathbb{G}_{m, \overline{X}}\to \mathbf{R}^1\pi_*\mathbb{G}_{m, \overline{X}:Y}\to\mathbf{R}^1\pi_*\mathbb{G}_{m, \overline{X}}\to 0\ .
\]
Thus $\mathbf{R}^1\pi_*\mathbb{G}_{m, \overline{X}:Y}$ is represented by an $S$-group scheme $\Pic_{\overline{X}:Y/S}$ which is an extension of $\Pic_{\overline{X}/S}$ by the $S$-torus $\pi_*i_*\mathbb{G}_{m, Y}/\pi_*\mathbb{G}_{m, \overline{X}}$ (cf. \cite[\S2.1]{BarSri:01a}). We let $\Pic_{\overline{X}:Y/S}^0$ denote the pullback of this extension to $\mathrm{Pic}^0_{\overline{X}/S}$, this is therefore a semi-abelian scheme over $S$.

Now let $Z\subset \overline{X}$ be another closed subscheme, \'etale over $S$ and such that $Y\cap Z=\varnothing$. We define $\Div_Z(\overline{X},Y)$ as the fppf sheaf associated to 
\[
	T/S\mapsto H^1_{Z_T}(\overline{X}_T,\mathbb{G}_{m,\overline{X}:Y}),
\]
By construction there is a natural map $u:\Div_Z(\overline{X},Y)\to \Pic_{\overline{X}:Y/S}$, and we can consider its pullback
\[
	u^0:\Div^0_Z(\overline{X},Y)\to \Pic^0_{\overline{X}:Y/S}
\]
to $\Pic^0_{\overline{X}/S}$. This object is a $1$-motive over $S$, and we denote it by $\Pic^+(X,Y)$ (or $\Pic^+(X)$ when $Y=\varnothing)$ where $X:=\overline{X}\setminus Z$. This is the version over $S$ of the motive defined in \cite[Def. 2.2.1]{BarSri:01a}.
 
Let $\Delta_1^{\eff}\subset \Delta_g^{\eff}$ be the full sub-diagram whose vertices are $(X,Y,1)$ for $X$ a smooth affine curve over $K$, $Y$ a closed subset consisting of finitely many closed points of $X$. We denote by $\CNM^\eff_1$ the Nori category universal\footnote{This is the cohomolgical version of the category $\mathsf{EHM}_1''$ of \cite{AyoBar:15a}.} for the standard representation
\begin{align*}
	H^1:\Delta_1^{\eff} &\to \Mod_\QQ\qquad \\ (X,Y,1)&\mapsto H^1(X(\CC),Y(\CC),\QQ).
\end{align*}
Moreover  we can define the following representation
\[
	\Pic^+:\Delta_1^{\eff} \to \mathcal{M}_{1} \qquad (X,Y,1)\mapsto \Pic^+(X,Y)=[\Div_Z^0(\overline{X},Y)\to \Pic^0_{\overline{X}: Y/K}]
\]
where $\overline{X}$ is the smooth compactification of $X$ and $Z=\overline{X}\setminus X$ is the boundary divisor. By universality this functor factors through $\CNM^\eff_1$ and it is show in \cite[Theorem 5.6]{AyoBar:15a} that this induces equivalence of categories
\[
	\CNM^\eff_1\xrightarrow{\cong} \mathcal{M}_{1,\QQ} \ . 
\]
\begin{prp}
	There is a functorial isomorphism
	\[ T_{\FOg}\left(\Pic^+ (X,Y) \right)= H^1_\FOg(X,Y)(1).\]
	In particular the filtered Ogus realisation is compatible with that on one motives, in the sense that the diagram
	\[  \xymatrix{  \mathcal{M}_{1,\QQ} \ar@{^(->}[d] \ar[rr]^-{T_{\FOg}} & & \FOg(K)  \ar@{^(->}[d] \\ \mathbf{DM}_{gm}(K) \ar[rr]^-{R_\FOg} & &  D^b(\FOg(K))    } \]
	commutes up to natural isomorphism.
\end{prp}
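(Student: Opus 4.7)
The strategy is to verify the functorial isomorphism
$T_{\FOg}(\Pic^+(X,Y)) \cong H^1_\FOg(X,Y)(1)$
component-by-component in $\FOg(K)$ --- on the underlying de\thinspace Rham vector space, on the $F$-isocrystal at each place $v$ in a suitable cofinite set of unramified places, on the Berthelot comparison $\epsilon_v$, and on the weight filtration --- and then to deduce the commutativity of the square from the universality of Nori's construction. The main difficulty will lie in the crystalline step, where one has to match the normalisations of the crystalline realisation of a $1$-motive used in \cite{ABVB16} against the rigid cohomology comparison underlying $H^1_\FOg(X,Y)$, keeping careful track of the Tate twist throughout.

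For the de\thinspace Rham component, recall that for a $1$-motive $M = [L \to G]$ one has $T_{\dR}(M) = \Lie(G^\natural)$, where $G^\natural$ is the universal vector extension. Taking $G = \Pic^0_{\overline{X}:Y/K}$ and $L = \Div^0_Z(\overline{X},Y)$, the interpretation of $\Pic^\natural_{\overline{X}:Y/K}$ via line bundles with integrable connection yields a canonical identification $\Lie(G^\natural) \cong H^1_\dR(X,Y/K)(1)$; this is the cohomological counterpart of the computation carried out in Hodge-theoretic form in \cite[\S6.2]{AyoBar:15a} and relies on the same d\'evissage as \cite[\S3]{BarSri:01a}. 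Under this identification, the weight filtration on $T_\FOg(\Pic^+(X,Y))$ --- in which the toric, abelian, and lattice parts of the $1$-motive contribute weights $-2$, $-1$, and $0$ respectively --- matches the mixed Hodge-theoretic weight filtration on $H^1_\dR(X,Y)(1)$.

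For the $F$-isocrystal at an unramified place $v$, I would spread $(\overline{X}, Y, Z)$ to a smooth model $(\overline{\cur{X}}, \cur{Y}, \cur{Z})$ over $\mathcal{O}_{K,S}$ for a suitable finite set $S$, which simultaneously spreads $\Pic^+(X,Y)$ to a $1$-motive $\cur{M}$ over $\mathcal{O}_{K,S}$. By the construction in \cite{ABVB16}, the isocrystal at $v \notin S$ of $T_\FOg(\Pic^+(X,Y))$ is read off from $\cur{M}_{k_v}$, and the compatibility of the universal vector extension with reduction then identifies it with $H^1_\rig(\cur{X}_{k_v}, \cur{Y}_{k_v}/K_v)(1)$ as an $F$-isocrystal. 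On both sides the comparison $\epsilon_v$ is the Berthelot (co-)specialisation, so by the naturality of the spreading-out process the two agree; functoriality in $(X,Y) \in \Delta_1^\eff$ is clear from each step.

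Once the functorial isomorphism is in hand, the commutativity of the square becomes formal: both $T_{\FOg} \circ \Pic^+$ and the restriction of $R_\FOg$ to the natural image of $\Delta_1^\eff$ are functors $\Delta_1^\eff \to \FOg(K)$ lifting the Betti representation $H^1$, and by the equivalence $\CNM^\eff_1 \cong \mathcal{M}_{1,\QQ}$ of \cite[Theorem 5.6]{AyoBar:15a} together with the universality of Nori's construction, they must agree up to natural isomorphism as functors $\mathcal{M}_{1,\QQ} \to \FOg(K)$.
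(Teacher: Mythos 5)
Your outline follows the same general route as the paper for the de Rham identification (via the universal vector extension, i.e.\ \cite[Lemma~2.6.2]{BarSri:01a}) and for spreading out to a $1$-motive over $\mathcal{O}_{K,S}$, but there is a genuine gap at exactly the point you yourself flag as the main difficulty: the assertion that ``the compatibility of the universal vector extension with reduction'' identifies the isocrystal at $v$ of $T_\FOg(\Pic^+(X,Y))$ with $H^1_\rig(\mathcal{X}_{k_v},\mathcal{Y}_{k_v}/K_v)(1)$ \emph{as an $F$-isocrystal} is precisely what has to be proved, and it does not follow from the vector-space statement. The reduction-compatibility of the universal vector extension (\cite[Corollary~4.2.1]{AndBar:05a}) only gives $T_\dR(\Pic^+(X,Y))\otimes_K K_v\cong T_\mathrm{cris}\left(\Pic^+(\mathcal{X},\mathcal{Y})_{k_v}\right)$ as $K_v$-vector spaces; the Frobenius on $T_\mathrm{cris}$ is defined through the crystalline/Dieudonn\'e theory of the $1$-motive, while the Frobenius on $H^1_\rig$ comes from rigid cohomology, and their agreement (after the Tate twist) is the heart of the proposition. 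The paper supplies the missing argument: an explicit map of $j^\dag$-complexes identifying relative de Rham with relative rigid cohomology, followed by a d\'evissage on weights --- using the two short exact sequences splitting off the toric, abelian and lattice parts (pure of weights $-2$, $-1$, $0$) and the fact that the weight filtration of an $F$-isocrystal over $K_v$ is canonically split --- which reduces the Frobenius compatibility to the pure graded pieces, where the only nontrivial case, weight $-1$, is settled by \cite[Theorem~B']{AndBar:05a} (comparing $T_\mathrm{cris}$ of an abelian variety with $H^1_\mathrm{cris}$ of its reduction). Some input of this kind is indispensable; your sketch never invokes it.

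A secondary point: for the commutativity of the square, universality of Nori's construction only compares functors out of $\CNM^\eff_1\cong\mathcal{M}_{1,\QQ}$, whereas the diagram involves the embedding $\mathcal{M}_{1,\QQ}\hookrightarrow\DM_{gm}(K)$ and the restriction of $R_\FOg$ along it. To relate that restriction to a representation of $\Delta_1^\eff$ one needs the compatibility of the embedding of $1$-motives into $\DM^{\eff}_{gm}$ with their embedding into (the homological version of) $D^b(\CNM)$, which is \cite[Proposition~8.3]{AyoBar:15a}; the paper cites this, and your universality argument does not by itself supply it.
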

\begin{proof}
	In \cite[Proposition~8.3]{AyoBar:15a} the authors show the (homological) compatibility between the embedding of 1-motives in $\DM^{\eff}_{gm}$ and that in (the homological version of) $D^b(\CNM)$. The cohomological version is just a reformulation; the second claim therefore follows from the first.
	
Let $T_\dR$ denote the de\thinspace Rham realisation on $\mathcal{M}_1$. To prove the first, we note that \cite[Lemma 2.6.2]{BarSri:01a} provides isomorphisms
\[
	T_\dR(\Pic^+(X,Y)) \overset{\cong}{\longrightarrow}H^1(\overline{X}, \calO_{\overline{X}}(-Y)\to \Omega^1_{\overline{X}}\langle Y+Z \rangle(-Y) ) \overset{\cong}{\longrightarrow} H^1_\dR(X,Y/K)
\]
of $K$-vector spaces. Indeed, the de Rham cohomology of $X$ is computed by the complex $\calO_{\overline{X}}\to \Omega^1_{\overline{X}}\langle Z \rangle$ and that of $Y$ by $\calO_Y$. Thus the relative cohomology is computed by the complex $\calO_{\overline{X}}(-Y)\to \Omega^1_{\overline{X}}\langle Z \rangle$ and it is enough to note that $ \Omega^1_{\overline{X}}\langle Z \rangle= \Omega^1_{\overline{X}}\langle Y+Z \rangle(-Y)$ to conclude.

Now let $v$ is an unramified place of good reduction for the triple $(\overline{X},X,Y)$; in other words not only does $\overline{X}$ have good reduction $\overline{\mathcal{X}}\rightarrow \spec{\mathcal{O}_v}$, but the complementary divisors $Y$ and $Z=\overline{X} \setminus X$ extend to disjoint closed subschemes $\mathcal{Y},\mathcal{Z} \subset \overline{\mathcal{X}}$ which are \'etale over $\mathcal{O}_v$. We therefore have a 1-motive 
\[ \Pic^+(\mathcal{X},\mathcal{Y}) \]
over $\mathcal{O}_v$ exhiniting the good reduction of $\Pic^+(X,Y)$, and by \cite[Corollary 4.2.1]{AndBar:05a} we obtain an isomorphism
\[ T_\dR\left(\Pic^+(X,Y)\right) \otimes_K K_v \overset{\cong}{\longrightarrow}  T_\mathrm{cris}\left(\Pic^+(\mathcal{X},\mathcal{Y})_{k_v}\right) \]
of $K_v$-vector spaces, where $T_\mathrm{cris}$ is the crystalline realisation of $\mathcal{M}_1$ over $k_v$. Now, we have an isomorphism of vector spaces
\[ H^1_\dR(X_{K_v},Y_{K_v}/K_v)\to H^1_\rig(\mathcal{X}_{k_v},\mathcal{Y}_{k_v}/K_v)\]
which concretely is induced by the map
\[
	[I\calO_{X_{K_v}^\an}\to \Omega^1_{X_{K_v}^\an}]\to [Ij^\dag\calO_{X_{K_v}^\an}\to j^\dag\Omega^1_{X_{K_v}^\an}]
\]
of complexes over $X_{K_v}^\an$, where $I$ is the ideal of $Y_{K_v}^\an$ (it easy to check that $Ij^\dag\calO_{X_{K_v}^\an}\to j^\dag\Omega^1_{X_{K_v}^\an}$ computes the relative rigid cohomology $H^1_\rig(X_{k_v},Y_{k_v}/K_v)$). This induces an isomorphism
\[ T_\mathrm{cris}\left(\Pic^+(\mathcal{X},\mathcal{Y})_{k_v}\right) \rightarrow H^1_\rig(\mathcal{X}_{k_v},\mathcal{Y}_{k_v}/K_v) \]
of $K_v$-vector spaces, and to conclude we need to show that this induces a Frobenius invariant isomorphism 
\[ T_\mathrm{cris}\left(\Pic^+(\mathcal{X},\mathcal{Y})_{k_v}\right) \rightarrow H^1_\rig(\mathcal{X}_{k_v},\mathcal{Y}_{k_v}/K_v)(1). \]
The key observation now is that in fact we can argue by d\'evissage on weights. Indeed, we have commutative diagrams
\[ \xymatrix{  0 \ar[r] & T_\mathrm{cris}\left(\ker c\right) \ar[r]\ar[d]^{\cong} & T_\mathrm{cris} \left(\Pic^+(\mathcal{X},\mathcal{Y})_{k_v}\right) \ar[r]^c\ar[d]^{\cong} & \ar[r]\ar[d]^{\cong} T_\mathrm{cris} \left(\Pic^+(\mathcal{X})_{k_v}\right)  & 0 \\
0 \ar[r] & \frac{H^0_\rig(\mathcal{Y}_{k_v})(1)}{H^0_\rig(\mathcal{X}_{k_v})(1)}\ar[r] & H^1_\rig(\mathcal{X}_{k_v},\mathcal{Y}_{k_v}/K_v)(1) \ar[r] & H^1_\rig(\mathcal{X}_{k_v}/K_v)(1) \ar[r] & 0  } \]
and 
\[ \xymatrix@C=2.2em{  0 \ar[r] & T_\mathrm{cris}\left( \Pic^+(\overline{\mathcal{X}})_{k_v}\right) \ar[r]\ar[d]^{\cong} & T_\mathrm{cris} \left(\Pic^+(\mathcal{X})_{k_v}\right) \ar[r]\ar[d]^{\cong} & \ar[r]\ar[d]^{\cong} T_\mathrm{cris} \left(\Div_{\mathcal{Z}}^0(\overline{\mathcal{X}})_{k_v} \right)  & 0 \\
0 \ar[r] & H^1_\rig(\overline{\mathcal{X}}_{k_v}/K_v)(1) \ar[r] & H^1_\rig(\mathcal{X}_{k_v}/K_v)(1) \ar[r] & H^0_\rig(\mathcal{Z}_{k_v}/K_v) \ar[r] & 0  } \]
with all rows exact. Since the  pieces
\begin{align*} T_\mathrm{cris}\left(\ker c \right) &\overset{\cong}{\longrightarrow} \frac{H^0_\rig(\mathcal{Y}_{k_v}/K_v)(1)}{H^0_\rig(\mathcal{X}_{k_v}/K_v)(1)} \\ 
T_\mathrm{cris}\left( \Pic^+(\overline{\mathcal{X}})_{k_v}\right) &\overset{\cong}{\longrightarrow} H^1_\rig(\overline{\mathcal{X}}_{k_v}/K_v)(1)   \\
T_\mathrm{cris} \left(\Div_{\mathcal{Z}}^0(\overline{\mathcal{X}})_{k_v} \right) &\overset{\cong} {\longrightarrow} H^0_\rig(\mathcal{Z}_{k_v}/K_v)
\end{align*}
are pure of weights $-2,-1$ and $0$ respectively, we can use the fact that the weight filtration on an $F$-isocrystal over $K_v$ is \emph{canonically} split to show that it suffices to verify the Frobenius compatibility on these pure graded pieces. The only non trivial Frobenius shows up in weight $-1$ where the comparison is proved by Andreatta and Barbieri-Viale \cite[Theorem B']{AndBar:05a} since $H^1_\rig(\overline{\mathcal{X}}_{k_v}/K_v) \cong H^1_\mathrm{cris}(\overline{\mathcal{X}}_{k_v}/\mathcal{O}_v)[1/p]$. 
\end{proof}

\section{The \texorpdfstring{$\FOg$}{FOg} avatar of the Tate conjecture}
Let $K$ be a number field and $X$ be a smooth and projective variety over $K$. Fix a finite set of unramified places $S$ such that $X$ extends to a smooth and proper scheme $\mathcal{X}\rightarrow \mathcal{O}_{K,S}$. We may consider the de\thinspace Rham cycle class map
\[ c_{1,\dR}:\mathrm{Pic}(X) \rightarrow H^2_\dR(X/K) \]
and, for any place $v\not\in S$, the crystalline cycle class map
\[ c_{1,\mathrm{cris}}:\mathrm{Pic}(X) \rightarrow \mathrm{Pic}(X_{K_v}) \overset{\cong}{\longleftarrow} \mathrm{Pic}(\mathcal{X}_{\mathcal{O}_v}) \rightarrow \mathrm{Pic}(\mathcal{X}_{k_v}) \rightarrow H^2_\mathrm{cris}(\mathcal{X}_{k_v}/\mathcal{O}_v)[1/p], \]
which by \cite[Corollary 3.7]{BO83} are compatible via the comparison isomorphism\footnote{The compatiblity of the crystalline and de Rham cycle class has been generalized to the rigid setting in \cite{CCM13,DegMaz:15a}.}
\[ H^2_\dR(X/K)\otimes K_v \isomto H^2_\mathrm{cris}(\mathcal{X}_{k_v}/\mathcal{O}_v)[1/p]. \] 
Since the image of $c_{1,\mathrm{cris}}$ is contained within the subspace of $H^2_\mathrm{cris}(\mathcal{X}_{k_v}/K_v)$ on which Frobenius $\phi_v$ acts via multiplication by $p_v$, we obtain an induced cycle class map
\[ c_{1,\FOg}:\mathrm{Pic}(X) \rightarrow \mathrm{Hom}_{\FOg(K)}(\mathbbm{1},H^2_{\FOg}(X)(1))=: \mathcal{T}^1(X). \]
where $\mathbbm{1}:=(K,(K_v,\sigma_v))$ is the unit object of $\FOg(K)$. Following the argument outlined in \cite[(5.6)]{Tat:94a}, we can show that whenever $X$ is a K3 surface, an `Ogus' version of the Tate conjecture holds for $X$, describing the \emph{rational} Picard group $\mathrm{Pic}(X)\otimes \QQ$.

\begin{thr}
	Let $X/K$ be a K3 surface Then the cycle class map
	\[
		c_{1,\FOg} :\Pic(X)\otimes \QQ\to \mathcal{T}^1(X)
	\]
	is surjective, and therefore an isomorphism.
\end{thr}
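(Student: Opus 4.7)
The plan is to adapt Tate's argument from \cite[(5.6)]{Tat:94a} for K3 surfaces, replacing the use of Faltings' theorem by the full-faithfulness of $T_\FOg$ on abelian varieties from \cite{ABVB16}. After passing to a finite Galois extension $K'/K$ over which the polarisation and the Kuga--Satake abelian variety $A$ of $X_{K'}$ are defined---a reduction justified because both $\Pic(X) \otimes \QQ$ and $\mathcal{T}^1(X)$ coincide with the $\Gal(K'/K)$-invariants of the corresponding objects over $K'$---we are reduced to proving surjectivity of $c_{1,\FOg}$ over $K'$. The polarisation splits off a copy of $\mathbbm{1}$ in $H^2_\FOg(X_{K'})(1)$, so it suffices to handle the primitive part $H^2_\FOg(X_{K'})(1)_\mathrm{prim}$.

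Since the Kuga--Satake correspondence is a motivated cycle in Andr\'e's sense, it lies in $\DM_{gm}(K')$. Applying the realisation $R_\FOg$ constructed above, we obtain an embedding
\[ \iota: H^2_\FOg(X_{K'})(1)_\mathrm{prim} \hookrightarrow \End_\FOg(H^1_\FOg(A)) \]
in $\FOg(K')$. Passing to $\Hom_{\FOg(K')}(\mathbbm{1}, -)$ and invoking \cite{ABVB16} applied to $A$ then yields an injection
\[ \iota_*: \mathcal{T}^1(X_{K'})_\mathrm{prim} \hookrightarrow \End_{\FOg(K')}(H^1_\FOg(A)) \cong \End(A) \otimes \QQ, \]
exactly paralleling Tate's use of Faltings.

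The remainder follows Tate essentially verbatim. Fix a complex embedding $K' \hookrightarrow \CC$ and let $\alpha \in \mathcal{T}^1(X_{K'})_\mathrm{prim}$, $\beta := \iota_*(\alpha)$. The Betti realisation $\beta_B \in \End(H^1(A_\CC, \QQ))$ has Hodge type $(0,0)$ since endomorphisms of $A$ preserve the Hodge filtration, and by the motivated nature of the Kuga--Satake correspondence it sits in the image of the Betti version $\iota_B$ of $\iota$. Pulling back along $\iota_B$ produces a rational Hodge class $\alpha_B \in H^2(X_\CC, \QQ)(1)_\mathrm{prim}$, which by Lefschetz $(1,1)$ for the K3 surface $X_\CC$ is algebraic, and (after possibly further enlarging $K'$) comes from $\Pic(X_{K'}) \otimes \QQ$. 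Compatibility of the algebraic cycle class with both the Betti and $\FOg$ realisations then forces the $\FOg$-cycle class of this algebraic lift to equal $\alpha$, and standard Galois averaging descends the result back to $K$. The main obstacle is the verification that Kuga--Satake admits an $\FOg(K')$-realisation compatible with Betti: this reduces to the motivated nature of the Kuga--Satake cycle together with suitable comparisons between the Nori-based construction of $R_\FOg$ and the standard realisations, both of which should follow from general principles but require careful bookkeeping.
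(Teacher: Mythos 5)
Your overall strategy is the paper's: reduce to a finite extension, split off the class of the polarisation, embed the primitive part of $H^2_\FOg(X)(1)$ into $\underline{\End}_{\FOg}(H^1_{\FOg}(A))$ via Kuga--Satake, apply the full faithfulness of the Ogus realisation on abelian varieties from \cite{ABVB16} in place of Faltings, and finish with the Lefschetz $(1,1)$ theorem over $\CC$. The endgame differs only cosmetically: you invert $\iota_B$ on its image, whereas the paper composes with a retraction $q_B$ supplied by semisimplicity of polarised pure Hodge structures and applies it to each endomorphism class $[f_i]_B$ separately; both versions work once one has the Betti/de~Rham compatibility of the Kuga--Satake embedding.

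The genuine gap is in your construction of $\iota$ itself. The assertion that the Kuga--Satake correspondence, being motivated in Andr\'e's sense, ``lies in $\DM_{gm}(K')$'' is unjustified: motivated cycles are not known to be algebraic, so they do not define morphisms of geometric motives, and one cannot obtain $\iota$ by applying $R_\FOg$ to such a morphism. What is actually required is that the Kuga--Satake embedding of $K'$-vector spaces $P^2_\dR(X)(1)\hookrightarrow \End(H^1_\dR(A))$ commutes, at almost every finite place and after the comparison with crystalline/rigid cohomology, with the semilinear Frobenius $\phi_v$ --- this Frobenius equivariance is exactly what makes it a morphism in $\Og(K')$ (and then in $\FOg(K')$). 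You flag this as ``the main obstacle'' but describe it as following from general principles plus bookkeeping; it does not. It is a substantive theorem of Ogus \cite[Theorem~7.3]{Ogu84}, and that citation is the key input at this point in the paper's proof. Without it (or an equivalent crystalline version of Andr\'e's arguments in \cite{And96}), the map $\iota_*$ into $\End(A)\otimes\QQ$ --- the very step where \cite{ABVB16} replaces Faltings --- is not available and the argument does not get off the ground.
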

\begin{proof}
	 To save notation, we will write $[\mathcal{L}]=c_{1,\FOg}(\mathcal{L}) \in H^2_\FOg(X)(1)$ for any line bundle on $X$. First of all, we can show that it is enough to prove the above theorem up to finite base change. Indeed, if we let $F/K$ be a finite extension and $\alpha\in \mathcal{T}^1(X)$ then, assuming the result holds for $X_F$, we can write
	$\alpha = \sum_i \lambda_i [\mathcal{L}_i]$, for some $\mathcal{L}_i\in \Pic(X_F)$ and $\lambda_i\in \QQ$. Now we can apply the norm to obtain
	\[
		\alpha = \frac{1}{[F:K]}\sum_i \lambda_i [N_{F/L}(\mathcal{L}_i)]
	\]
as required.

Now, fix an ample line bundle $\mathcal{L}$ on $X$, and let $P^2_{\FOg}(X)(1)\subset H^2_\FOg(X)(1)$ denote the subspace of primitive classes, that is the orthogonal complement to $K\cdot [\mathcal{L}]$ under the perfect pairing
\[ H^2_\FOg(X)(1) \otimes H^2_\FOg(X)(1) \rightarrow \mathbbm{1}. \]
Then we have a direct sum decomposition $H^2_\FOg(X)(1)\cong P^2_\FOg(X)(1)\oplus \mathbbm{1}$ in $\FOg(K)$, which implies that $\mathcal{T}^1(X)=(P^2_\FOg(X)(1) \cap \mathcal{T}^1(X))\oplus  (\mathbbm{1}\cap \mathcal{T}^1(X)$). Since  $\mathbbm{1}\cap \mathcal{T}^1(X)=\QQ\cdot [\mathcal{L}]$  we only need to prove the statement for  $\alpha\in P^2_\FOg(X)(1) \cap \mathcal{T}^1(X)$.
%\footnote{COMMENT: for a polarised variety we can define rational primitive cohomology
%\[
%	P^k(X):=\ker (L^{n+1-k}:H^{k}(X)\to H^{2n+2-k})
%\]
%in particular for K3 surfaces
%\[
%	P^2(X):=\ker (L:H^{2}(X)\to H^{4}) \ .
%\]
%Also we have $H^2(X)=P^2(X)\oplus LP^0(X)$}

	Let 
	\[
		j_\FOg:
	P^2_\FOg(X)(1) \to \underline{\End}_\FOg(H^1_{\FOg}(A))\ 
	\]
	be the inclusion induced by the Kuga--Satake construction  \cite[Theorem 7.3]{Ogu84} - since it suffices to prove the claim after a finite extension we can assume that everything is defined over $K$. By full faithfulness of the $\FOg$-realisation on abelian varieties \cite[Theorem 3.14]{ABVB16} we have $j_\FOg(\alpha)=\sum_i\lambda_i[f_i]_\FOg$, for $f_i\in \End(A)$ and $\lambda_i\in \QQ$.  
	
	Now embed $K$ into $\C$ and consider the analogous picture in Betti cohomology (subscript $B$ stands for Betti cohomology)
	\[
		j_B:P^2_B(X(\CC),\QQ(1))\hookrightarrow \End_{\QQ}(H^1_B(A(\CC),\QQ))
	\] 
	which admits a retraction $q_B$ as the target is a polarised pure Hodge structure and the category of polarised pure Hodge structure is semi-simple \cite[Corollary 2.12]{PS08}. Thus $q_B([f_i]_B)$ is a Hodge class since $[f_i]_B$ is so. It follows by the Lefschetz (1,1)-theorem that there exists a line bundle $\mathcal{L}_i\in \Pic(X_\CC)$ such that $[\mathcal{L}_i]_B=q_B([f_i]_B)$. After replacing $K$ by a finite extension, we can assume that all the $\mathcal{L}_i$ are defined over $K$. By the compatibility of the Betti and de\thinspace Rham cycle class maps we get the equality
	\[
		\alpha = q_\dR j_\dR(\alpha)=\sum_i \lambda_i q_\dR [f_i]_\dR =\sum_i \lambda_i [\mathcal{L}_i]_\dR
	\]
inside $P^2_\dR(X_{\C})(1)$, hence we find $\alpha = \sum_i \lambda_i [\mathcal{L}_i]_\dR $ inside $P^2_\dR(X)(1)$, and the proof is complete.
\end{proof}

\bibliographystyle{plain}%bibsty
%\bibliography{2018_ogus_lib.bib}
%\section*{References}

\end{document}